\theoremstyle{plain}
\newtheorem{cor}{Corollary}[section]
\newtheorem{lem}{Lemma}[section]
\newtheorem{thm}{Theorem}[section]
\newtheorem{prop}{Proposition}[section]
\theoremstyle{definition}
\newtheorem{exa}{Example}[section]
\newtheorem{NB}{Remark}[section]
\newtheorem{dfn}{Definition}[section]
\newcommand{\bdm}{\begin{displaymath}}
\newcommand{\edm}{\end{displaymath}}
\newcommand{\be}{\begin{equation}}
\newcommand{\ee}{\end{equation}}
\newcommand{\ba}[1]{\begin{array}{#1}}
\newcommand{\ea}{\end{array}}
\newcommand{\bea}[1][]{\begin{eqnarray#1}}
\newcommand{\eea}[1][]{\end{eqnarray#1}}
\newcommand{\bqr}{\begin{eqnarray}}
\newcommand{\eqr}{\end{eqnarray}}
\newcommand{\bqrs}{\begin{eqnarray*}}
\newcommand{\eqrs}{\end{eqnarray*}}
\newcommand{\btab}{\begin{tabular}}
\newcommand{\etab}{\end{tabular}}
\newcommand{\x}{\times}
\newcommand{\tr}{\ensuremath{\mathrm{tr}}}
\newcommand{\univ}{\ensuremath{\mathrm{univ}}}
\newcommand{\tw}{\ensuremath{\mathrm{tw}}}
\newcommand{\cyclic}[1]{\stackrel{{\scriptsize #1}}{\mathfrak{S}}}
\newcommand{\C}{\ensuremath{\mathbb{C}}}
\newcommand{\T}{\ensuremath{\mathcal{T}}}
\newcommand{\TM}{\ensuremath{\mathcal{T}\!M}}
\newcommand{\Hol}{\ensuremath{\mathrm{Hol}}}
\newcommand{\hut}{\wedge}
\newcommand{\Ric}{\ensuremath{\mathrm{Ric}}}
\newcommand{\Scal}{\ensuremath{\mathrm{Scal}}}
\newcommand{\Scalg}{\ensuremath{\mathrm{Scal}^g}}
\newcommand{\U}{\ensuremath{\mathrm{U}}}
\newcommand{\SO}{\ensuremath{\mathrm{SO}}}
\newcommand{\Spin}{\ensuremath{\mathrm{Spin}}}
\newcommand{\Orth}{\ensuremath{\mathrm{O}}}
\newcommand{\Dslash}{\slash{\!\!\!\!D}}
\begin{document}
\def\haken{\mathbin{\hbox to 6pt{%
                 \vrule height0.4pt width5pt depth0pt
                 \kern-.4pt
                 \vrule height6pt width0.4pt depth0pt\hss}}}
    \let \hook\intprod
\setcounter{equation}{0}
%
%
\thispagestyle{empty}
%
\title{A note on generalized Dirac eigenvalues for split holonomy and torsion}
%
%
%
\author{Ilka Agricola}
\author{Hwajeong Kim}
\address{\hspace{-5mm}
Ilka Agricola \newline
Fachbereich Mathematik und Informatik \newline
Philipps-Universit\"at Marburg\newline
Hans-Meerwein-Strasse \newline
D-35032 Marburg, Germany\newline
{\normalfont\ttfamily agricola@mathematik.uni-marburg.de}}

\address{\hspace{-5mm}
Hwajeong Kim \newline
Department of Mathematics\newline
Hannam University\newline
Daejeon 306-791, Republic of Korea\newline
{\normalfont\ttfamily hwajkim@hnu.kr}\newline
}
\keywords{Dirac operator, eigenvalue estimate, metric connection with torsion}
\thanks{This work was supported by NRF(National Research Foundation of Korea) Grant funded by the Korean Government(NRF-2010-0002925)}
\subjclass{(MSC 2010):  53 C 25-29; 58 J 50; 58 J 60}
%
\pagestyle{headings}
\begin{abstract}
We study the Dirac spectrum on compact
Riemannian spin manifolds $M$ equipped with a metric connection
$\nabla$ with skew
torsion $T\in\Lambda^3M$ in the situation where
the tangent bundle splits under the holonomy of $\nabla$ and the
torsion of $\nabla$ is of `split' type. We prove
an optimal lower bound for the first eigenvalue of the
Dirac operator with torsion  that generalizes Friedrich's classical
Riemannian estimate.

\end{abstract}
%
\maketitle
%

%
%
\section{Introduction }
%
It is well-known that for a Riemannian manifold $(M,g)$, the fact that
 the holonomy representation of the Levi-Civita connection decomposes
in several irreducible modules has strong consequences for the geometry
of the manifold -- by de Rham's theorem, the manifold is locally a product,
and the spectrum of the Riemannian Dirac operator $D^g$  can be controlled
(\cite{ECKim04}, \cite{Alexandrov07}.

If the Levi-Civita connection is replaced by a metric connection with
torsion, not much is known, neither about the holonomy nor about the
implications for the spectrum. This note is a contribution to the much larger
task to improve our understanding of the holonomy of metric connections
with skew-symmetric torsion. The foundations of the topic were laid in
\cite{Agricola&F04a} (see also the review \cite{Agricola06}), substantial
progress on the holonomy in the irreducible case was achieved in
\cite{OR12} and \cite{Nagy13}. If the connection $\nabla$ is geometrically
defined, that is, it is the characteristic connection of some $G$-structure
on $(M,g)$, one is interested in the spectrum of the associated
characteristic Dirac operator $\Dslash$, a direct generalization of the
Dolbeault operator for a hermitian manifold and Kostant's cubic Dirac
operator for a naturally reductive homogeneous space. In \cite{Agricola&F&K08},
it was outlined that the first eigenvalue of  $\Dslash$ may be estimated from
below if the torsion is parallel; however, the paper could only deal with
$G$-structures on a case by case basis. A first general eigenvalue
estimate depending only on the connection $\nabla$ was given by means of
twistor theory in the authors' joint paper \cite{ABK11} with J.~Becker-Bender.
In this paper, we also examined the case that the manifold was reducible,
that the holonomy of the connection $\nabla$ decomposed accordingly, and
that the torsion $3$-form had no mixed parts.

This note is the first article devoted to the situation that the holonomy
representation of $\nabla$ on $TM$ splits into several irreducible submodules,
but the underlying manifold can yet be irreducible. We can then
prove an optimal eigenvalue estimate (Corollary \ref{estimate} )for the
Dirac operator $\Dslash$ under the assumption
that the torsion has no non-trivial contribution on any of the
$\nabla$-parallel distributions of $TM$ -- we call  $(M,g,\nabla)$ then a
manifold with \emph{split holonomy} (see Definition \ref{dfn.split-geom}).
Examples show that such geometries
arise quite naturally in the investigation of $G$-structures, a fact that
had not been observed before.
%
\section{The estimate}
%
%
\subsection{Geometric set-up}
We assume that $(M^n,g)$ is an oriented Riemannian
manifold endowed with a metric connection $\nabla$ with skew-symmetric torsion
$T\in\Lambda^3(M^n)$,
\bdm
\nabla_X Y\ :=\ \nabla^{g}_X Y +  \frac{1}{2} \cdot T(X,Y,-).
\edm
The holonomy group $\Hol(M^n;\nabla)$
is then a subgroup of $\SO(n)$, and we shall assume that it is a closed
subgroup to
avoid pathological cases. In order to distinguish it from the torsion,
the tangent bundle and its subbundles will be denoted by $\TM^n$,
$\T_1,\T_2\ldots$.
Recall that for a $\nabla$-parallel distribution, the standard proof of the
following basic lemma carries over from Riemannian geometry
 without modifications (see for example \cite[Prop.\,5.1]{Kobayashi&N1}).
\begin{lem}\label{basic-lemma}
Let $\T\subset\TM^n$ be a parallel distribution and $Y\in \T$. For any $X\in
\TM^n$, $\nabla_X Y$ is again in $\T$; in particular,
$R(X_1,X_2)Y\in\T$ for any $X_1,X_2$.
\end{lem}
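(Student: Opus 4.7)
The plan is to use the equivalent characterization of a $\nabla$-parallel distribution via parallel transport, after which everything reduces to a single line.

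First I would unpack what "$\nabla$-parallel distribution" means: for every piecewise smooth curve $\gamma$ from $p$ to $q$, the parallel transport operator $P_\gamma^\nabla : T_pM \to T_qM$ satisfies $P_\gamma^\nabla(\T_p) = \T_q$. This is equivalent to saying that sections of $\T$ are closed under covariant differentiation, but I want to take the parallel-transport version as the primary working definition so that the argument is geometric rather than tautological.

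Next, to show $\nabla_X Y \in \T$ for $Y$ a section of $\T$ and $X \in T_pM$, I would fix a curve $\gamma$ with $\gamma(0)=p$ and $\gamma'(0)=X$, and use the standard formula
\[
\nabla_X Y \ =\ \left.\frac{d}{dt}\right|_{t=0} \bigl(P^\nabla_{\gamma,t}\bigr)^{-1} Y_{\gamma(t)},
\]
where $P^\nabla_{\gamma,t} : T_pM\to T_{\gamma(t)}M$ is $\nabla$-parallel transport along $\gamma$. Since $\T$ is $\nabla$-parallel, the inverse transport $(P^\nabla_{\gamma,t})^{-1}$ maps $\T_{\gamma(t)}$ into $\T_p$, so the curve $t\mapsto (P^\nabla_{\gamma,t})^{-1} Y_{\gamma(t)}$ lies entirely in the vector subspace $\T_p\subset T_pM$. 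Its derivative at $t=0$ therefore also lies in $\T_p$, which gives $\nabla_X Y \in \T_p$.

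Finally, the curvature statement is immediate: writing
\[
R(X_1,X_2)Y \ =\ \nabla_{X_1}\nabla_{X_2}Y - \nabla_{X_2}\nabla_{X_1}Y - \nabla_{[X_1,X_2]}Y,
\]
each term is obtained by applying $\nabla$ (twice or once) to a section of $\T$, hence lies in $\T$ by the first part. I do not really expect any obstacle here; the argument is purely formal and, as the authors remark, proceeds exactly as in the Levi-Civita case because the only property used is metric-independence of parallel transport and the linearity of the fibers of $\T$. The presence of skew torsion in $\nabla$ plays no role at all in this lemma.
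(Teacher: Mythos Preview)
Your argument is correct and is precisely the standard proof the paper has in mind: the authors do not write out a proof but simply remark that ``the standard proof of the following basic lemma carries over from Riemannian geometry without modifications'' and cite \cite[Prop.\,5.1]{Kobayashi&N1}, which is exactly the parallel-transport argument you give. Your observation that the skew torsion plays no role is also on point and matches the paper's remark.
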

Let $\T$ be a parallel distribution, $\mathcal{N}$ its orthogonal distribution defined
by $\mathcal{N}_x:=\T_x^\perp$ in every point $x\in M^n$. The fact that all elements
of $\Hol(M^n;\nabla)$ are orthogonal transformations implies that
$\mathcal{N}$ is again a parallel distribution. Thus, the tangent bundle
splits into an orthogonal sum of parallel distributions ($n_i:=\dim\T_i$)
\bdm
\TM^n\ =\ \T_1\oplus\ldots\oplus \T_k, \text{ and }
\Hol(M^n;\nabla)\subset \Orth(n_1)\x\ldots\x\Orth(n_k)\subset\SO(n).
\edm
We assume that every distribution $\T_i$ is again orientable and that
the holonomy preserves the orientation, i.\,e.~\emph{we assume}
\bdm
\Hol(M^n;\nabla)\subset \SO(n_1)\x\ldots\x\SO(n_k).
\edm
We denote an orthonormal frame of $\T_i$ by
$e^i_{1},\ldots,e^i_{n_i}$, $i=1,\ldots, k$.
For convenience, we assume that the spaces $\T_i$ are numbered by ascending
order, $n_1\leq n_2\leq \ldots\leq n_k$.
We recall the following properties of the curvature of the connection $\nabla$
from our previous article \cite{ABK11}:
\begin{enumerate}
\item
Since the distributions $\T_i,\T_j$ are orthogonal,
Lemma \ref{basic-lemma} implies for any vector fields $X,Y$ that
$g(\mathcal{R}(X,Y)\T_i,\T_j)=0 \text{ if } i\neq j$.
\item The Ambrose-Singer theorem implies that the curvature operator
$R(X,Y)$ vanishes if  $X\in\T_i,\ Y\in \T_j,\ i\neq j$.
\item The Ricci tensor has block structure,
\bdm
\Ric\ =\ \left[ \begin{array}{c|c|c} \Ric_1 & 0&\\ \hline 0 &\ddots& 0\\
    \hline  & 0 & \Ric_k
 \end{array}\right],
\edm
  i.\,e.~$\Ric(X,Y)\neq 0$ can only happen if  $X,Y\in\T_i$ for some $i$.
\item The scalar curvature splits into `partial scalar curvatures'
$\Scal_i:=\tr\, \Ric_i$, and ${\displaystyle\Scal=\sum_{i=1}^k \Scal_i}$.

\end{enumerate}
Be cautious that despite of the block structure of the Ricci curvature,
one has in general that  $R(X,Y,U,V)\neq 0$ if $X,Y\in \T_i,\ U,V\in  \T_j$
for $i\neq j$.
The space of $3$-forms splits under the holonomy representation into
\bdm
\Lambda^3(\T)\ =\ \bigoplus_{i=1}^k\Lambda^3(\T_i)\oplus
\bigoplus_{i\neq j}\Lambda^2(\T_i)\wedge \T_j\oplus \bigoplus_{i<j<k}
\T_i\wedge\T_j\wedge\T_k
\edm
In our first paper \cite{ABK11}, we treated in detail the situation that the
torsion $T$ of the connection $\nabla$ is entirely contained
in the first summand, i.\,e.~may be written as a sum $T=\sum_i T_i$
with $T_i\in\Lambda^3(\T_i)$. This is basically the case when $M$ is locally
a product.

The main point of this note is the observation that the other
extreme case, i.\,e.~that $T$ consists only of terms of the third type,
can also be controlled and is in fact not so exotic as it may
appear. Examples will be given in the last section. Thus, we define:
\begin{dfn}\label{dfn.split-geom}
If the torsion $T$ satisfies $T(X,Y)=0$ whenever $X,Y\in\T_i$ and $\nabla T=0$,
we shall call $(M,g,\nabla)$ a manifold with \emph{split holonomy}.
\end{dfn}
Although the definition would make sense without the additional assumption
$\nabla T=0$, we shall see in the sequel that our method for estimating Dirac
eigenvalues relies strongly on this condition. Obviously, interesting
split geometries ($T\neq 0$) can only exist if $k\geq 3$, i.\,e.~the tangent bundle
splits into at least  three subbundles.
\begin{exa}
A metric almost contact manifold $M$ of dimension $2n+1$ has structure group
$\U(n)$, embedded as upper $(2n)\x (2n)$-matrices in $\Orth(2n+1)$. Thus,
the holonomy of a characteristic connection (if existent) is necessarily reducible,
the tangent bundle $\T M$ splits into a $2n$-dimensional and a one-dimensional
 parallel distribution. This is not yet sufficient for a manifold with
split holonomy; but in many cases, $\T M$ decomposes  further  with a torsion
of split type (see Section \ref{sec:examples}). On the other side, a
strict $G_2$-manifold or $\Spin(7)$-manifold (i.\,e.~without further
reduction to a subgroup $G\subset G_2,\ \Spin(7)$) cannot be of split
holonomy, since  $G_2$ and $\Spin(7)$ act by an irreducible representation.
\end{exa}
%
\subsection{Dirac operators and Schr\"odinger-Lichnerowicz formulas}
\noindent
%
Let us assume from now on that $M$ is also a  spin manifold.
Let $p_i$ denote the orthogonal projection from $\TM^n$ onto
$\T_i$ and define the `partial connections'
\bdm
\nabla^i_X\ :=\ \nabla_{p_i(X)} ,\quad \text{hence }
\nabla\ =\ \sum_{i=1}^k \nabla^i.
\edm
We use the same notation for their lifts to the spinor bundle $\Sigma M$.
They induce the notions of `partial Dirac operators' and `partial spinor Laplacians'
($\mu$ is the usual Clifford multiplication) through
\bdm
D_i\ :=\ \mu\circ\nabla^i, \quad D\ =\ \sum_{i=1}^k D_i,\quad
\Delta^i \ :=\ (\nabla^i)^*\nabla^i,\quad \Delta\ =\ \sum_{i=1}^k \Delta^i.
\edm
As long as the connection is not further specified, this is a correct
definition; if $\nabla$ is chosen to be an invariant connection for a
$G$ structure, i.\,e.~a characteristic connection, the `right'
Dirac operator to consider is the
characteristic Dirac operator $\Dslash$ associated with the connection with
torsion  $T/3$. Nevertheless, we shall also use $D_i$ and $D$ as an
intermediate tool.

At a fixed point $p\in M^n$ we choose orthonormal bases
$e^i_1,\ldots,e^i_{n_i}$  of the distributions $\T_i$ ($i=1,\ldots,k$) such that
$(\nabla_{e^i_m}e^j_l)_p = 0 $ for all suitable indices $i,j,m,l$.
This means in particular that
 $[e^i_m,e^j_l]=-T(e^i_m,e^j_l)$ and $\nabla^g_{e^i_m}e^i_m=0$. Denoting
$\nabla_{e^i_m}$ by $\nabla^i_m$, the partial Dirac and Laplace operators
may then be expressed as
\bdm
D_i\ :=\ \sum_{m=1}^{n_i} e^i_{m}\nabla^i_m,\quad
\Delta^i \ :=\ -\sum_{m=1}^{n_i} \nabla^i_m\nabla^i_m.
\edm
The divergence term of the Laplacian vanishes because of
$\nabla^g_{e^i_m}e^i_m=0$.
We compute the squares of the partial Dirac operators $D_i$.
\begin{prop}\label{partial-SL-1}
If $(M,g,\nabla)$ is a manifold with split holonomy,
the partial Dirac operators $D_i$ satisfy the identities
\bdm
(D_i)^2\psi \, =\, \Delta^i\psi + \tilde{\sigma}^i_T
+ \frac{1}{4}\tau_i\cdot\psi,
\edm
where
\bqr\label{4-form}
\tilde{\sigma}^i_T = \
=\ \frac{1}{2}\sum_{
\begin{array}{ccc}
  k<l, p<q,&   \\
 e^i_ke^i_le_pe_q \ 4 \text{-form} &
\end{array}
 } \!\!\!\!\!\! R(e^i_k,e^i_l,e_p,e_q)
e^i_ke^i_le_pe_q
\eqr
for any  numbering $\{e_p\}_{p=1,\cdots, n}$ of the total
 orthonormal frame $\cup_{i=1}^{k} \{e^i_1,\ldots,e^i_{n_i}\}$.
\end{prop}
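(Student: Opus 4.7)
My plan is to imitate the classical Weitzenb\"ock argument for the Riemannian Dirac operator, carried out at one fixed point $p$ with the synchronous frame $(\nabla_{e^i_m}e^j_l)_p=0$ already chosen above the statement. Under this choice, each $e^i_l$ passes through $\nabla^i_m$ without correction at $p$, so
\[ D_i^2\psi \;=\; \sum_{m,l=1}^{n_i} e^i_m\, e^i_l\, \nabla^i_m \nabla^i_l \psi. \]
Extracting $m=l$ and antisymmetrising the remainder by means of the Clifford relation $e^i_me^i_l+e^i_le^i_m=-2\delta_{ml}$ gives the usual decomposition
\[ D_i^2\psi \;=\; \Delta^i\psi \;+\; \sum_{m<l} e^i_m\, e^i_l\, [\nabla^i_m,\nabla^i_l]\psi. \]

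Next I would reduce the commutator to a curvature operator. Since the frame was arranged so that $\nabla_{e^i_m} e^i_l=0$ at $p$, and since the split-holonomy hypothesis forces $T(e^i_m,e^i_l)=0$, the bracket $[e^i_m,e^i_l]_p=-T(e^i_m,e^i_l)=0$ at $p$, so $[\nabla^i_m,\nabla^i_l]_p=R(e^i_m,e^i_l)_p$ on the spinor bundle. Substituting the standard formula $R(X,Y)=\tfrac12\sum_{p<q}R(X,Y,e_p,e_q)\,e_pe_q$ leaves
\[ \sum_{m<l} e^i_m e^i_l [\nabla^i_m,\nabla^i_l]\psi \;=\; \tfrac12 \sum_{\substack{k<l\\ p<q}} R(e^i_k,e^i_l,e_p,e_q)\; e^i_k e^i_l e_p e_q\cdot\psi, \]
with $(e_p)$ the total orthonormal frame as in the statement.

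The bulk of the work is then the familiar reorganisation of this Clifford sum by how many of $e^i_k,e^i_l,e_p,e_q$ coincide. When all four are distinct, the product $e^i_k e^i_l e_p e_q$ is a genuine $4$-form and its contribution is, by definition, exactly $\tilde\sigma^i_T$. When $\{e_p,e_q\}=\{e^i_k,e^i_l\}$, the product collapses to the scalar $-1$; using the curvature block structure (property~(2) above) to force both $e_p,e_q\in\T_i$ in this case and the antisymmetry $R(X,Y,U,V)=-R(X,Y,V,U)$, a direct count identifies the resulting scalar with $\tfrac14\Scal_i$, which is what the statement denotes $\tfrac14\tau_i$.

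The subtle case, and the step I expect to be the main obstacle, is the intermediate one in which exactly one pair of indices coincides; here the Clifford product reduces to a genuine $2$-form $\pm e^i_l e_q$. By the block structure, such a term can only be nonzero when $e_p,e_q\in\T_i$, so all four vectors of $R$ lie in $\T_i$. I would then invoke the torsion-corrected first Bianchi identity,
\[ \mathfrak{S}_{X,Y,Z}\, R(X,Y)Z \;=\; \mathfrak{S}_{X,Y,Z}\bigl[(\nabla_X T)(Y,Z)+T(T(X,Y),Z)\bigr], \]
which for $X,Y,Z\in\T_i$ reduces to the classical cyclic identity $\mathfrak{S}\,R(X,Y)Z=0$, because $\nabla T=0$ and $T(\T_i,\T_i)=0$ kill both right-hand terms. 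With the algebraic Bianchi identity restored on $\T_i$, the $2$-form residue cancels exactly as in the classical Riemannian Lichnerowicz computation, and only the $4$-form piece $\tilde\sigma^i_T$ and the scalar piece $\tfrac14\tau_i\cdot\psi$ survive, as claimed.
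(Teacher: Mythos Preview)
Your argument is correct and follows the same Weitzenb\"ock-type computation as the paper: split off the diagonal to get $\Delta^i$, use the split-holonomy hypothesis $T(e^i_k,e^i_l)=0$ to reduce the commutator to spinorial curvature, and then expand $R^\Sigma$ in the total frame. In fact you are more careful than the paper at the final step: the paper simply asserts that ``same indices'' give $\tfrac14\tau_i$ and ``different indices'' give $\tilde\sigma^i_T$, whereas you correctly isolate the intermediate $2$-form residue, observe via the block structure that it lives entirely in $\T_i$, and kill it with the algebraic Bianchi identity restored on $\T_i$ by $\nabla T=0$ and $T(\T_i,\T_i)=0$ --- this is exactly the justification the paper's proof leaves implicit.
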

\begin{proof}
For the first identity, let $k$ and $l$ be indices running
between $1$ and $\dim \T_i=n_i$. We  split the sum into
terms with $k=l$ and $k\neq l$,
\bdm
(D_i)^2\psi \, =\, \sum_{k,l=1}^{n_i} e^i_k\nabla^i_k e^i_l\nabla^i_l\psi
\ =\  -\sum_{k=1}^{n_i} \nabla^i_k\nabla^i_k\psi+\sum_{k\neq l}
e^i_ke^i_l\nabla^i_k\nabla^i_l\psi
\ =\ \Delta^i+\sum_{k< l}e^i_ke^i_l(\nabla^i_k\nabla^i_l-
\nabla^i_l\nabla^i_k)\psi
\edm
und express the second term through the curvature in the spinor bundle,
\bdm
(D_i)^2\psi \, =\, \Delta^i\psi + \sum_{k<l} e^i_ke^i_l\left[
R^\Sigma(e^i_k,e^i_l) - \nabla_{T(e^i_k,e^i_l)}\right] \psi.
\edm
By our assumption of split holonomy, $T(e^i_k,e^i_l)=0$, so the corresponding
term vanishes.
$R^\Sigma$ in turn can be expressed through the curvature $R$ (see
\cite{Agricola03}, \cite{ABK11}), and, by the curvature properties listed
before, only terms with all four vectors inside $\T_i$ can occur:
\bdm \sum_{k<l} e^i_ke^i_l R^\Sigma(e^i_k,e^i_l)\ =\
\frac{1}{2}\sum_{k<l} e^i_ke^i_l R(e^i_k\wedge e^i_l)\cdot \psi\
=\ \frac{1}{2}\sum_{k<l, p<q} R(e^i_k,e^i_l,e_p,e_q)
e^i_ke^i_le_pe_q\psi. \edm
Note here that $e_p,e_q$ are not necessarily from $\T_i$.
The summands with same indices add up to  half the partial scalar curvature,
while different indices  yield the Clifford multiplication by the $4$-form $\tilde{\sigma}_T$ by (\ref{4-form}),
\bdm
\sum_{k<l} e^i_ke^i_l R^\Sigma(e^i_k,e^i_l)\
=\tilde{\sigma}^i_T+ \frac{1}{4}\tau_i. \qedhere
\edm
\end{proof}
Recall that the characteristic Dirac operator $\Dslash^2$ is linked
to the Laplacian of the connection $\nabla$ through the following
Schr\"odinger-Lichnerowicz formula (\cite{Bismut}, \cite{Agricola&F04a}).
Here, $\Scalg$ and
$\Scal$ denote the scalar curvatures of the Levi-Civita connection
and the new connection $\nabla$, respectively, and
\bdm
\sigma_{T} \ :=\ \frac{1}{2} \sum_k (e_k\haken T)\hut (e_k\haken
T).
\edm
%
\begin{thm}\label{new-weitzenboeck}
For $\nabla T=0$, the spinor Laplacian $\Delta$ and the square
of the Dirac operator $\Dslash$ are related by
\begin{equation}\label{D3-2} \Dslash^2
 \ = \ \Delta -\frac{1}{4}\,T^2 +\frac{1}{4}\,\Scalg + \frac{1}{8} \|
          T\|^2
 \ = \ \Delta^c + \sigma_{T} + \frac{1}{4}\Scal+ \frac{1}{4}T^2
\end{equation}
\end{thm}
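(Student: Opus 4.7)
The plan is to derive both expressions for $\Dslash^2$ from the classical Schr\"odinger-Lichnerowicz formula for $D^g$ by carefully tracking the torsion contributions. The first step is to recognise the spinorial relation
\[
\nabla^c_X\psi \ =\ \nabla^g_X\psi + \frac{1}{12}(X\haken T)\cdot\psi,
\]
which follows from the fact that the characteristic connection has torsion $T/3$ and that skew endomorphisms lift to spinors as $1/4$ times the associated $2$-form under Clifford action. Summing over an orthonormal frame and invoking the elementary identity $\sum_k e_k\cdot (e_k\haken T)=3T$, valid for any $3$-form, produces the clean formula
\[
\Dslash\ =\ D^g + \frac{1}{4}\,T,
\]
in which $T$ is viewed as a Clifford endomorphism of $\Sigma M$.

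Squaring this gives
\[
\Dslash^2\ =\ (D^g)^2 + \frac{1}{4}\bigl(D^g\circ T + T\circ D^g\bigr) + \frac{1}{16}\,T^2,
\]
where $(D^g)^2=\Delta^g+\tfrac{1}{4}\Scalg$ is the classical formula. I would evaluate the anticommutator by expanding both $D^g(T\psi)$ and $T(D^g\psi)$ in a normal frame and collecting Clifford products; the derivative contributions recombine into $dT+d^*T$ plus terms involving $\nabla^g T$. Under the assumption $\nabla T=0$, one has $d^*T=0$ and $dT=2\sigma_{T}$, and the $\nabla^g T$ corrections collapse into algebraic torsion squares. Finally I would translate $\Delta^g$ into $\Delta$ using $\nabla_X=\nabla^g_X+\tfrac{1}{4}(X\haken T)$ on spinors, taking the trace and again invoking $\nabla T=0$ to kill the remaining cross terms. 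After bookkeeping of the algebraic pieces, the first equality drops out.

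The second equality is then obtained purely algebraically from the first. Three ingredients are needed: the scalar curvature identity $\Scal=\Scalg-\tfrac{3}{2}\|T\|^2$, which is a direct consequence of the torsion contribution to the Ricci tensor; the Laplacian relation between $\Delta$ and $\Delta^c$, obtained by repeating the $\nabla$ versus $\nabla^g$ trick with the characteristic connection in place of $\nabla$; and the Clifford-algebraic identity for a $3$-form
\[
T\cdot T\ =\ -\|T\|^2 - 2\,\sigma_{T}.
\]
Substitution and regrouping of the constants on the right-hand side of the first expression reproduce the second.

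The main obstacle I anticipate is not conceptual but computational: tracking the factor $1/3$ intrinsic to the characteristic connection, the correct signs in the three transfers between $\nabla^g$, $\nabla$ and $\nabla^c$ on spinors, and the precise shape of the anticommutator $D^gT+TD^g$ for a parallel $3$-form, where the algebraic Clifford contributions have to be disentangled from genuine derivative terms. The individual steps are standard, but a sign error at any stage propagates and spoils the final identity.
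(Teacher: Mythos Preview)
The paper does not prove this theorem at all: it is \emph{recalled} from the literature with the citations \cite{Bismut} and \cite{Agricola&F04a} immediately preceding the statement, and no argument is given. So there is no ``paper's own proof'' to compare your proposal against.

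That said, your sketch is essentially the standard derivation one finds in the cited references (particularly \cite{Agricola&F04a} and the survey \cite{Agricola06}): start from $\Dslash=D^g+\tfrac{1}{4}T$, square, invoke the classical Lichnerowicz formula, and convert the Riemannian Laplacian into $\Delta$ using the spinorial lift of $\nabla$. The algebraic identities you list---$\sum_k e_k\cdot(e_k\haken T)=3T$, the Clifford square $T^2=-\|T\|^2-2\sigma_T$ for a $3$-form, and $dT=2\sigma_T$, $\delta T=0$ under $\nabla T=0$---are all correct and are exactly the ingredients used in those papers. Your anticipated difficulty (bookkeeping of the factors $1/3$, $1/4$, $1/12$ and the signs in the three connection comparisons) is the genuine content of the computation; there is no hidden idea beyond that. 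One small caution: the symbol $\Delta^c$ in the second displayed equality is not defined in this paper and is inherited from the cited sources, so before carrying out the ``purely algebraic'' passage to the second form you should pin down from \cite{Agricola&F04a} exactly which Laplacian is meant, or the constants will not match.
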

%
Then the Dirac operators $\Dslash$ and $D^c_i$ satisfy the
following relationship:
\begin{prop}\label{partial-SL-2}
If $(M,g,\nabla)$ is a manifold with split holonomy,  we have
\be\label{sigma}
\sum_{i=1}^k\tilde{\sigma}^i_T = \sigma_T,
\ee
which implies
\begin{equation}\label{D3-3}
 \sum_{i=1}^k (D_i)^2\psi \ = \
\Delta \psi +  \sigma_T\psi + \frac{1}{4}\Scal\,\psi
\ = \ \Dslash^2 \psi- \frac{1}{4}T^2\psi.
\end{equation}
\end{prop}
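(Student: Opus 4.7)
The plan is to reduce the statement to two subclaims: the key $4$-form identity (\ref{sigma}), and then the assembled operator equation (\ref{D3-3}). I would establish (\ref{sigma}) first, because the remainder is then a mechanical combination of Proposition \ref{partial-SL-1} and Theorem \ref{new-weitzenboeck}.

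For (\ref{sigma}), the approach is to revisit the derivation underlying Proposition \ref{partial-SL-1}. There, $\tilde{\sigma}^i_T$ emerged as the $4$-form component of the block curvature Clifford action $\sum_{k<l} e^i_k e^i_l R^\Sigma(e^i_k, e^i_l)$, with scalar remainder $\tfrac{1}{4}\Scal_i$. By the block property (2) of the curvature, namely $R(X,Y)=0$ whenever $X \in \T_i$ and $Y \in \T_j$ with $i \neq j$, summing these block expressions over $i$ reassembles the \emph{total} curvature Clifford action $\sum_{K<L} e_K e_L R^\Sigma(e_K, e_L)$ over an orthonormal frame of the full tangent bundle --- the would-be cross terms vanish identically. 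This total sum is precisely the curvature term appearing in the classical Schr\"odinger--Lichnerowicz formula for the Dirac operator of $\nabla$; its $4$-form part is known (see \cite{Agricola03} and the derivation in \cite{ABK11}) to equal $\sigma_T$ whenever $\nabla T = 0$, via the first Bianchi identity
\[
\mathfrak{S}_{X,Y,Z}\, R(X,Y)Z \;=\; \mathfrak{S}_{X,Y,Z}\, T(T(X,Y),Z).
\]
Equating $4$-form parts, i.e.\ discarding the scalar contributions $\tfrac{1}{4}\Scal = \sum_i \tfrac{1}{4}\Scal_i$ on both sides, yields (\ref{sigma}).

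Given (\ref{sigma}), the first equality in (\ref{D3-3}) is obtained by summing Proposition \ref{partial-SL-1} over $i$ and invoking the additivity $\Delta = \sum_i \Delta^i$ and $\Scal = \sum_i \Scal_i$. The second equality is Theorem \ref{new-weitzenboeck} rearranged: identifying the Laplacian $\Delta^c = \nabla^*\nabla$ appearing there with our $\Delta = \sum_i \Delta^i$ (both are the spinor Laplacian of $\nabla$, as one checks at a point using the frame condition $\nabla^g_{e^i_m} e^i_m = 0$ that was adopted before Proposition \ref{partial-SL-1}), the identity $\Dslash^2 = \Delta^c + \sigma_T + \tfrac{1}{4}\Scal + \tfrac{1}{4}T^2$ rearranges directly to $\Dslash^2\psi - \tfrac{1}{4}T^2\psi = \Delta\psi + \sigma_T\psi + \tfrac{1}{4}\Scal\,\psi$.

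The principal obstacle lies in (\ref{sigma}), specifically the translation of the first Bianchi identity for $\nabla$ into a Clifford-algebraic identity producing exactly $\sigma_T$. This is the unique step where both halves of the split-holonomy hypothesis are used in concert: the per-block vanishing $T(X,Y)=0$ for $X, Y \in \T_i$ was already exploited in Proposition \ref{partial-SL-1} to dispose of the extraneous $\nabla_{T(e^i_k, e^i_l)}$ terms, while $\nabla T = 0$ is precisely what allows the Bianchi identity to collapse into a pure $T \cdot T$ expression which Clifford-multiplies to $\sigma_T$, with no leftover $\nabla T$ contributions.
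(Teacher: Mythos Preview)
Your proposal is correct and follows essentially the same route as the paper: both use the vanishing of the cross-curvature terms $R(e^i_m,e^j_l)=0$ for $i\neq j$ to reduce the global curvature $4$-form to the sum of the per-block pieces $\tilde{\sigma}^i_T$, identify that global $4$-form with $\sigma_T$ via the first Bianchi identity under $\nabla T=0$ (the paper phrases this as $dT=2\sigma_T$ so that $\mathfrak{S}_{X,Y,Z}R(X,Y,Z,V)=\sigma_T(X,Y,Z,V)$), and then assemble (\ref{D3-3}) from Proposition \ref{partial-SL-1} and Theorem \ref{new-weitzenboeck}. The paper runs the argument in the opposite direction --- starting from $\sigma_T$ and splitting it, rather than summing the $\tilde{\sigma}^i_T$ --- but the substance is identical.
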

\begin{proof}
For the identity (\ref{sigma}), observe that $\nabla T=0$
implies $dT=2\sigma_T$, hence the first Bianchi identity is
reduced to
\[\cyclic{X,Y,Z} R(X,Y,Z,V)\ =
\sigma_T(X,Y,Z,V).\]
>From the symmetry property of $R(X,Z,U,V)$ with respect to $X,Y$
and $U,V$ and  Lemma \ref{basic-lemma}, it holds that $R(e^i_m,
e^j_l)=0$, for $i\neq j$. Thus, we have the following equation for
the $4$-form and the partial $4$-forms:
\bqrs
 \sigma_T & = & \frac{1}{2}\sum_{p<q,r<s}R(e_p,e_q,e_r,e_s)e_pe_qe_re_s\\
          & = & \sum_i\frac{1}{2}\sum_{p<q,r<s, e_p,e_q\in\T_i}
          R(e_p,e_q,e_r,e_s)e_pe_qe_re_s=\sum_i\tilde{\sigma}^i_T.
\eqrs
 The equality (\ref{D3-3}) is then a consequence of
Proposition \ref{partial-SL-1}, (\ref{D3-2}) and (\ref{sigma}).
\end{proof}
%
%
\subsection{An Adapted Twistor Operator}\label{sec:twis}\noindent
%
For our eigenvalue estimate, the crucial point is to use an adapted twistor
operator. Define an operator $ P:\Gamma(\Sigma
M) \longrightarrow  \Gamma (T^{\ast}\otimes\Sigma M)$ by
\bdm
P\psi \ :=  \ \nabla^c \psi +
\sum_{i=1}^{k}\frac{1}{n_i}\sum_{l=1}^{n_i}e^i_l \otimes
e^i_l\cdot D^c_i\psi.
\edm
By  a direct computation, one checks
\bqr \label{twistor}
\|P\psi\|^2  = \int \langle(\Delta - \sum_{i=1}^k
                   \frac{1}{n_i}(D_i)^2)\psi, \psi\rangle dM.\label{twistor}
\eqr
%
%
The crucial step is the following integral identity. Recall
that the dimensions $n_i$ of the distributions $\T_i$ are chosen to be
ordered, $n_1\leq n_2\leq\ldots\leq n_k$:
%
\begin{thm}\label{D/3-square-2}
Let $(M,g,\nabla)$ be a manifold of split holonomy.
Then the Dirac operator $\Dslash$ satisfies
\bea[*]
\int\left(\Dslash^2\psi,\psi\right)dM
   &=&\frac{n_k}{4(n_k - 1)}\int\left(\Scalg\psi,\psi\right)dM +
            \int\left(\left(\frac{n_k}{8(n_k - 1)}||T||^2 -
             \frac{1 + n_k}{4n_k - 4}T^2\right)\psi, \psi\right)dM\\
   & &  + \frac{n_k}{n_k - 1}||P\psi||^2 +
      \frac{n_k}{n_k - 1}\sum_{i=1}^{k-1}\left(\frac{1}{n_i} -
      \frac{1}{n_k}\right)\|(D^c_i)^2\psi\|^2 .
\eqrs
\end{thm}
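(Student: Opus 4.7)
The plan is to derive the stated identity purely algebraically from the twistor relation (\ref{twistor}) together with the two Schr\"odinger-Lichnerowicz formulas at our disposal, namely (\ref{D3-2}) from Theorem \ref{new-weitzenboeck} and the split-holonomy variant (\ref{D3-3}) from Proposition \ref{partial-SL-2}. The single decisive algebraic move is to decompose the weighted sum of partial Dirac squares so as to isolate the largest dimension $n_k$:
\[
\sum_{i=1}^{k}\frac{1}{n_i}(D^c_i)^2
\ =\ \frac{1}{n_k}\sum_{i=1}^{k}(D^c_i)^2
\ +\ \sum_{i=1}^{k-1}\left(\frac{1}{n_i}-\frac{1}{n_k}\right)(D^c_i)^2 .
\]
Only the first piece couples to $\Dslash^2$ via (\ref{D3-3}); the second piece will survive as the extra $\sum_{i<k}(1/n_i-1/n_k)\|D^c_i\psi\|^2$ term in the final statement.

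Carrying this out, I would substitute the decomposition above into (\ref{twistor}), then use (\ref{D3-3}) to replace $\sum_i(D^c_i)^2\psi$ by $\Dslash^2\psi-\tfrac14 T^2\psi$, and use the first form of (\ref{D3-2}) to eliminate the spinor Laplacian as $\Delta = \Dslash^2 + \tfrac14 T^2 - \tfrac14\Scalg - \tfrac18\|T\|^2$. After these substitutions, the coefficient in front of $\int\langle\Dslash^2\psi,\psi\rangle\,dM$ collects to $1-1/n_k=(n_k-1)/n_k$. Multiplying the resulting identity by $n_k/(n_k-1)$ and solving for $\int\langle\Dslash^2\psi,\psi\rangle\,dM$ then produces precisely the four coefficients $\tfrac{n_k}{4(n_k-1)}$, $\tfrac{n_k}{8(n_k-1)}$, $-\tfrac{n_k+1}{4(n_k-1)}$, and $\tfrac{n_k}{n_k-1}$ that appear in the statement.

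The only bookkeeping subtlety -- and hence the one place where a sign or factor could easily be dropped -- is the combination of the two independent $T^2$ contributions: one coming from (\ref{D3-3}) weighted by $1/n_k$, the other from the elimination of $\Delta$ weighted by $1$, which must merge into the single coefficient $-\tfrac{n_k+1}{4(n_k-1)}$ after the global rescaling. Apart from this, everything is a straightforward substitution, and no new analytic input is required; the standing hypothesis $\nabla T=0$ built into the definition of split holonomy is exactly what licenses both (\ref{D3-2}) and (\ref{D3-3}). The choice of $n_k=\max_i n_i$ as pivot is what makes every prefactor $1/n_i-1/n_k$ in the residual sum non-negative, which will be essential for extracting the sharp lower bound of Corollary \ref{estimate}.
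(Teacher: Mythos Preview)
Your proposal is correct and follows essentially the same route as the paper: both arguments insert the Schr\"odinger--Lichnerowicz formula (\ref{D3-2}) and the split-holonomy identity (\ref{D3-3}) into the twistor relation (\ref{twistor}), collect the $\Dslash^2$ coefficient $(n_k-1)/n_k$, and rescale. The paper singles out the term $\tfrac{1}{n_k}(D_k)^2$ and then replaces $(D_k)^2$ via (\ref{D3-3}), whereas you equivalently split $\sum_i\tfrac{1}{n_i}(D_i)^2=\tfrac{1}{n_k}\sum_i(D_i)^2+\sum_{i<k}(\tfrac{1}{n_i}-\tfrac{1}{n_k})(D_i)^2$ and apply (\ref{D3-3}) to the full sum; this is the same algebra.
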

\begin{proof}
>From the generalized Schr\"odinger-Lichnerowicz formula
\[\Dslash^2 = \Delta -\frac{1}{4}\,T^2 +\frac{1}{4}\,\Scalg + \frac{1}{8}
\|T\|^2.\,\]
So, we compute
\bdm
\Delta -\sum_{i=1}^{k}\frac{1}{n_i}(D_i)^2 \ = \
\Dslash^2
- \frac{1}{n_k}(D_k)^2 - \sum_{i=1}^{k-1}\frac{1}{n_i}(D_i)^2
- \left[ -\frac{1}{4}\,T^2 +\frac{1}{4}\,\Scalg + \frac{1}{8} \|T\|^2\right]
\edm
By equation (\ref{D3-3}), this can be rewritten
\bea[*]
\Delta -\sum_{i=1}^{k}\frac{1}{n_i}(D_i)^2 & = &
\Dslash^2 - \frac{1}{n_k}\Dslash^2 -
           \sum_{i=1}^{k-1}\left[\frac{1}{n_i}-\frac{1}{n_k}\right](D_i)^2
+ \frac{1}{4n_k}T^2  - \left[ -\frac{1}{4}\,T^2 +\frac{1}{4}\,\Scalg +
  \frac{1}{8} \|T\|^2\right] \\
& = & \left[\frac{n_k-1}{n_k}\right]\Dslash^2
-\sum_{i=1}^{k-1}\left[\frac{1}{n_i} -\frac{1}{n_k}\right](D_i)^2
- \frac{1}{4}\,\Scalg  + \left[\frac{1}{4n_k}+\frac{1}{4}\right]T^2
     -\frac{1}{8}\|T\|^2
\eea[*]
The identity (\ref{twistor}) for the the adapted twistor operator $P$ thus
implies the desired identity.
\end{proof}
We now recall the general Schr\"odinger-Lichnerowicz formula from
Theorem \ref{new-weitzenboeck}, which relates
$\Dslash^2$ and $\Delta^c$. Since the torsion $T$ is $\nabla$-parallel,
 $\Delta$ commutes with $T$, and we obtain (\cite{Agricola&F04b}, Proposition 3.4)
\bdm
\Dslash^2\circ T \ =\ T\circ \Dslash^2.
\edm
It is therefore possible to split the spin bundle $\Sigma M$ in the orthogonal sum
of its eigenbundles for the $T$ action,
\bdm
\Sigma M\ =\ \bigoplus_{\mu}\Sigma_{\mu},
\edm
and to consider $\Dslash^2$ on each of them, since
$\nabla^s$ and $\Dslash^2$ both preserve this splitting.
We shall henceforth denote the different eigenvalues of $T$ on $\Sigma$ by
$\mu_1,\ldots,\mu_l$. This method of evaluating eigenvalues was
first described in \cite{Agricola&F&K08}, see also \cite{Kassuba10}.
\begin{cor}\label{estimate}
Let $\lambda$  be
an eigenvalue of $\Dslash^2$ with an eigenspinor $\psi$
which lies in $\mu$-eigenspace of $T$. Then,
\[\lambda(\Dslash^2|_{\Sigma_{\mu}}) \ge \frac{n_k}{4(n_k - 1)}\Scalg_{\text{min}} +
\frac{n_k}{8(n_k - 1)}||T||^2 - \frac{1 + n_k}{4(n_k - 1)}\mu^2:=\beta_{\ensuremath{\mathrm{split}}}(\mu).
\]
The equality holds if and only if $\Scalg$ is constant, $P(\psi) = 0$
and either $n^i = n^k$   or $D_i\psi = 0$, for all $i=1,\ldots, k$.
For the smallest eigenvalue $\lambda$ of $\Dslash^2$ on the whole
spin bundle $\Sigma M $, one thus obtains the estimate
\bdm
\lambda\ \geq\ \frac{n_k}{4(n_k - 1)}\Scalg_{\text{min}} +
\frac{n_k}{8(n_k - 1)}||T||^2 - \frac{1 + n_k}{4(n_k - 1)}\max(\mu_1^2,\ldots,\mu_k^2):=\beta_{\ensuremath{\mathrm{split}}}.
\edm
\end{cor}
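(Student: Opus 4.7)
The plan is to read off the corollary as a direct consequence of the integral identity in Theorem \ref{D/3-square-2} by specialising to an eigenspinor and throwing away the visibly non-negative terms. The only structural input needed beyond Theorem \ref{D/3-square-2} is the remark that $\Dslash^2$ commutes with $T$ (since $\nabla T=0$), so that the spinor bundle splits into $T$-eigenbundles $\Sigma_\mu$ each preserved by $\Dslash^2$, and on each such $\Sigma_\mu$ the operator $T^2$ acts as the scalar $\mu^2$.

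Concretely, I would take $\psi\in\Gamma(\Sigma_\mu)$ with $\Dslash^2\psi=\lambda\psi$, plug it into the identity of Theorem \ref{D/3-square-2}, and replace $T^2\psi$ by $\mu^2\psi$ and $\Scalg$ by its pointwise lower bound $\Scalg_{\min}$. The left-hand side becomes $\lambda\|\psi\|_{L^2}^2$, while on the right-hand side the scalar-curvature term is bounded below by $\frac{n_k}{4(n_k-1)}\Scalgmin\|\psi\|_{L^2}^2$, the $\|T\|^2$ term contributes $\frac{n_k}{8(n_k-1)}\|T\|^2\|\psi\|_{L^2}^2$ (recall $\|T\|^2$ is constant since $\nabla T=0$), and the $T^2$ term contributes $-\frac{1+n_k}{4(n_k-1)}\mu^2\|\psi\|_{L^2}^2$. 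The remaining two terms
\[
\frac{n_k}{n_k-1}\|P\psi\|^2\quad\text{and}\quad
\frac{n_k}{n_k-1}\sum_{i=1}^{k-1}\left(\frac{1}{n_i}-\frac{1}{n_k}\right)\|D_i\psi\|^2
\]
are both manifestly non-negative: the first is a norm, and the coefficients in the second are $\geq 0$ by our ordering convention $n_1\leq\ldots\leq n_k$. Dropping them yields the claimed inequality after division by $\|\psi\|_{L^2}^2$.

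For the equality discussion, each of the three inequalities used is an equality if and only if its corresponding term vanishes: $\int(\Scalg-\Scalgmin)|\psi|^2\,dM=0$ forces $\Scalg\equiv\Scalgmin$ on the support of $\psi$, hence (by unique continuation for $\Dslash^2$-eigenspinors, and since the estimate is an integrated one) $\Scalg$ is constant; the twistor term forces $P\psi=0$; and for each $i$ with $n_i<n_k$ the coefficient $\frac{1}{n_i}-\frac{1}{n_k}$ is strictly positive, so the sum vanishes iff $D_i\psi=0$ for those $i$, which is exactly the stated alternative ``$n_i=n_k$ or $D_i\psi=0$''.

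Finally, to estimate the smallest eigenvalue on the whole bundle $\Sigma M=\bigoplus_j\Sigma_{\mu_j}$, I would note that since $\Dslash^2$ preserves each summand, any eigenspinor decomposes into pieces lying in the $\Sigma_{\mu_j}$'s, and thus satisfies the $\mu_j$-estimate for some $j$. The lower bound $\beta_{\ensuremath{\mathrm{split}}}(\mu)$ is a decreasing function of $\mu^2$ (the coefficient $-\frac{1+n_k}{4(n_k-1)}$ is negative), so the worst — hence universally valid — bound is obtained by taking $\mu^2=\max(\mu_1^2,\ldots,\mu_l^2)$, giving $\beta_{\ensuremath{\mathrm{split}}}$. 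The only subtle point is the unique-continuation/constancy argument in the equality case; everything else is bookkeeping on the identity of Theorem \ref{D/3-square-2}.
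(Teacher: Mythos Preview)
Your argument is correct and is exactly the approach the paper takes; its proof is the single sentence ``The inequality is a direct consequence of Theorem \ref{D/3-square-2},'' and you have simply spelled out the bookkeeping (non-negativity of the twistor term and of the coefficients $\frac{1}{n_i}-\frac{1}{n_k}$, and the replacement $T^2\psi=\mu^2\psi$) that this sentence leaves implicit. Your treatment of the equality case and of the passage to the whole spinor bundle via $\max_j\mu_j^2$ is likewise what the paper intends but does not write out.
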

\begin{proof}
The inequality is a direct consequence of  Theorem \ref{D/3-square-2}.
\end{proof}
\begin{NB}\label{other-estimates}
The eigenvalue estimate from \cite{ABK11} for reducible holonomy may not be
applied in this situation. However, two other general eigenvalues may be
compared to our result. Both require only the condition $\nabla T=0$, no
assumption on the holonomy:
\begin{enumerate}

\item In \cite{Agricola&F04a}, it is proved that
\bdm
\lambda\ \geq\ \frac{1}{4}\Scal^g_{\min} + \frac{1}{8}\|T\|^2 -\frac{1}{4}\,
\max(\mu_1^2,\ldots,\mu_k^2)\ =:\ \beta_{\univ}.
\edm
This is called the \emph{universal eigenvalue estimate}, because it
is derived from the universal Schr\"odinger-Lichnerowicz formula cited in
Theorem \ref{new-weitzenboeck}.
\item In the first part of \cite{ABK11}, twistor theory is used to
prove ($n:=\dim M$)
\bdm
\lambda\ \geq\ \frac{n}{4(n-1)}\Scal^g_{\min} + \frac{n(n-5)}{8(n-3)^2}\|T\|^2
+ \frac{n(4-n)}{4(n-3)^2}\,
\max(\mu_1^2,\ldots,\mu_k^2)\ =:\ \beta_{\tw}.
\edm
This estimate has the advantage that it yields the classical
Riemannian estimate by Friedrich from \cite{Friedrich80} if $T=0$.
\end{enumerate}
\end{NB}
\begin{NB}
It is interesting to ask what the `extreme' case would be for
our new eigenvalue estimate (Corollary \ref{estimate}). If there is only
parallel distribution, $\T=\T_1$ (i.\,e., $k=1$ and $n_1=\dim M$),
the condition of split holonomy requires $T=0$ (and in particular,
$\nabla T=0$ is trivially fulfilled). The estimate does then
coincide with Friedrich' estimate \cite{Friedrich80}, i.\,e.~it is the best
possible one.
\end{NB}
%
\section {Examples}\label{sec:examples}
%
Several examples will show that the assumption of split holonomy occurs quite
naturally in the study of $G$ structures on manifolds.
\begin{exa}
The twistor spaces of the only $4$-dimensional compact self-dual Einstein
manifolds $S^4$ and $\C\mathbb{P}^2$ are the $6$-dimensional manifolds
$\C\mathbb{P}^3$ and $F(1,2)=\U(3)/U(1)\times U(1)\times U(1)$, the manifold of
flags $l\subset v$ in $\C^3$ such that $\dim l=1$ and $\dim v=2$.
It is well-known that they carry two Einstein metrics; one is K\"ahler
(on $\C\mathbb{P}^3$, this is exactly the Fubini-Study metric), the other is
nearly K\"ahler. We shall henceforth be interested in their
nearly K\"ahler structure.  The characteristic connection $\nabla$ for nearly
K\"ahler manifolds was first considered by Gray in \cite{Gray70} and,
in this particular case, happens to coincide with the Chern connection
(see the review \cite{Gauduchon97} for general hermitian connections and
\cite{Friedrich&I2} for the general description of characteristic connections on
almost hermitian manifolds).
By a theorem of Kirichenko (\cite{Kirichenko77}, \cite{Alexandrov&F&S04}),
the torsion $T$ of $\nabla$ is parallel, $\nabla T=0$, which is the first
of the conditions needed for split holonomy.
In \cite{Belgun&M01}, it was proved that the only complete, 6-dimensional,
non-K\"ahler nearly K\"ahler manifolds such that the characteristic
connection has reduced holonomy are exactly $\C\mathbb{P}^3$ and $F(1,2)$
(as Riemannian manifolds, both are of course irreducible).
For computational details on these very interesting spaces, we refer to
\cite[Section 5.4]{BFGK}. In fact, one checks that in both cases, the holonomy
of $\nabla$ splits the tangent space in three two-dimensional subbundles
$\T^2_i$ (the upper index indicates the dimension)
\bdm
\T M\ =\ \T^2_1\oplus \T^2_2\oplus \T^2_3, \quad M=\C\mathbb{P}^3
\text{ or } F(1,2).
\edm
The general identities for  nearly K\"ahler manifolds imply that
$\Scal^g=30$, $\|T\|^2=4$ and $T$ has the eigenvalues $\mu=0$ and
$\mu=\pm 2\|T\|$. Furthermore,  there exist two Riemannian Killing spinors
$\varphi^\pm$ that satisfy $\Dslash \varphi^\pm = \mp \|T\|\varphi^{\pm} $
\cite{Friedrich&G85}.
To fix the ideas, in the notations of \cite[Section 5.4 a)]{BFGK} for
$M= F(1,2)$: $\T_1=\langle e_1, e_2\rangle,\ \T_2=\langle e_3, e_4\rangle,\
\T_3=\langle e_5, e_6\rangle$, the almost complex structure  and
the torsion $T$ of the characteristic connection $\nabla$ are
\bdm
\Omega\ =\ e_{12}-e_{34}+e_{56},\quad
T\ =\ e_{245}+ e_{146}-e_{236}+ e_{135}.
\edm
Here and in the sequel, we abbreviate exterior products $e_i\wedge e_j\wedge
\ldots$ as $e_{ij\ldots}$.
Thus, we are indeed in the situation of split holonomy as defined in
Definition \ref{dfn.split-geom}, and the eigenvalue estimate from
Corollary \ref{estimate} takes in this situation the value
\bdm
\lambda \ \geq\ \frac{2}{4(2-1)}\Scal^g + \frac{2}{8(2-1)}\|T\|^2 -
\frac{1+2}{4(2-1)}\max (0, 4 \|T\|^2)\ =\ 4\ =:\ \beta_{\text{split}}
\edm
Thus, one sees that our estimate is optimal in this situation, since
the two Killing spinors realize this lower bound. However,
the result could also have been obtained directly from \cite{Agricola&F04a},
since the bound $\beta_{\text{split}}$ coincides with the universal
eigenvalue estimate $\beta_\univ$ (see Remark \ref{other-estimates}). This is due
to the deeper fact that the two Killing spinors are in fact $\nabla$-parallel.
\end{exa}
\begin{exa}
In \cite{Schoemann}, the author classifies  $6$-dimensional almost hermitian
manifolds with parallel torsion by discussing the possible holonomy
groups of the characteristic connection (denoted by $\nabla^c$ in this paper)
and the normal form of the torsion. One finds that there are many more
examples of manifolds with split holonomy -- for example, all cases with
$\mathrm{Hol}(\nabla^c)\subset S^1, T^2$, of which there are many interesting
examples. However, it is not possible to test the eigenvalue estimate
from Corollary \ref{estimate} explicitly, since the curvature is not
fixed by these data.
\end{exa}
\begin{exa}
The Stiefel manifolds $M^5= \SO(4)/\SO(2)$ and $M^7=\SO(5)/\SO(3)$ carry
a normal homogeneous metric and a distinguished Sasaki structure; both are
described in detail in \cite{ABK11}, Example 5.1 (parameter value $t=1/2$ of
the metric)  and Example 5.2 (parameter value $t=1$ of the metric).
Both are well-known spaces in the investigation of Riemannian
spin manifolds: besides the metric that we are investigating, both
carry an Einstein-Sasaki metric and, therefore, they admit two Riemannian
Killing spinors  (\cite{Friedrich80} for $M^5$,
\cite{Kath00} for $M^7$).
The characteristic connection $\nabla$ turns out to be the canonical
connection of the underlying homogeneous space, hence the holonomy
representation coincides with the isotropy representation
(see \cite{Kobayashi&N2}) and the torsion is automatically
parallel (the space is naturally reductive).
The tangent bundle splits into (again, the upper index
denotes the dimension)
\bdm
\TM^5\ =\ \T^2_1\oplus \T^2_2\oplus \T^1_3,\quad
\TM^7\ =\ \T^3_1\oplus \T^3_2\oplus \T^1_3.
\edm
The Sasaki direction corresponds in both cases to the one-dimensional
bundle. With respect to a consecutive numbering of vectors of an orthonormal basis
(this coincides with the numbering from \cite{ABK11}), the torsion is
\bdm
T_{M^5}\ =\ - (e_{135}+e_{245}),\quad T_{M^7}\ =\ - (e_{147}+ e_{257}+e_{367}),
\edm
so one sees that again, the manifold is spin and of split holonomy.
There are two spinors that are constant under the lift of the isotropy
representation, thus they define global sections and they are
$\nabla$-parallel with Dirac eigenvalue $\lambda=1$. One easily checks
with the geometric data given in \cite{ABK11} that this is equal to
the bound given by all three known eigenvalue bounds,
\bdm
1\ =\ \beta_{\text{split}}\ =\ \beta_\univ\ =\ \beta_\tw.
\edm
This shows that our bound is, in this situation, again optimal.
We suspect that these examples can be generalized to the Tanno deformation
of any Einstein-Sasaki manifold: they have parallel torsion and
a natural splitting of the tangent bundle such that the torsion is of split
type, but it seems hard to prove in general that these subbundles are
indeed holonomy invariant. A description of the  Tanno deformation of an
Einstein-Sasaki manifold and of its characteristic connection may be found in
\cite{BB12}.
\end{exa}
%

\end{document}